\newtheorem{thm}{Theorem}
\newdefinition{dfn}{Definition}
\newdefinition{ex}{Example}
\newdefinition{conex}{Counterexample}
\newproof{proof}{Proof}
\newtheorem{rem}{Remark}
\begin{document}

\begin{frontmatter}

%% Title, authors and addresses

%% use the tnoteref command within \title for footnotes;
%% use the tnotetext command for the associated footnote;
%% use the fnref command within \author or \address for footnotes;
%% use the fntext command for the associated footnote;
%% use the corref command within \author for corresponding author footnotes;
%% use the cortext command for the associated footnote;
%% use the ead command for the email address,
%% and the form \ead[url] for the home page:
%%
%% \title{Title\tnoteref{label1}}
%% \tnotetext[label1]{}
%% \author{Name\corref{cor1}\fnref{label2}}
%% \ead{email address}
%% \ead[url]{home page}
%% \fntext[label2]{}
%% \cortext[cor1]{}
%% \address{Address\fnref{label3}}
%% \fntext[label3]{}

%\title{GROUP $SU$-ACTION AND ITS APPLICATIONS TO GROUP THEORY}
\title{$\alpha$-Parameterized Differential
Transform Method }

\author[]{K. Aydemir$^{a,}$}
\ead{kadriyeaydemr@gmail.com}
\author[]{O. Sh. Mukhtarov$^{a,b}$\corref{cor1}}
\ead{omukhtarov@yahoo.com}\cortext[cor1]{Corresponding Author (Tel:
+90 356 252 16 16, Fax: +90 356 252 15 85)}
%%\author[rvt]{H. Ol\v{g}ar}
%%\ead{hayati.olgar@gop.edu.tr}
%% \ead[url]{home page}
%% \fntext[label2]{}

%% use optional labels to link authors explicitly to addresses:
%% \author[label1,label2]{}
\address[rvt]{Department of Mathematics, Faculty of Arts and Science, Gaziosmanpa\c{s}a University,\\
 60250 Tokat, Turkey}
 \address[]{Institute of Mathematics and Mechanics, Azerbaijan National Academy of Sciences,,\\
  Baku, Azerbaijan}
%% \journal{ Numerische Mathematik}
%%\address[label2]{Department of Mathematics, Faculty of Arts and Science, Gaziosmanpa\c{s}a University,\\
 %60250 Tokat, Turkey}
%%\address[label3]{Department of Mathematics, Faculty of Arts and Science, Gaziosmanpa\c{s}a University,\\
 %60250 Tokat, Turkey}

%% use optional labels to link authors explicitly to addresses:
%% \author[label1,label2]{}
%%\address[label2]{Department of Mathematics, Faculty of Arts and Science, Gaziosmanpa\c{s}a University,\\
 %60250 Tokat, Turkey}
%%\address[label3]{Department of Mathematics, Faculty of Arts and Science, Gaziosmanpa\c{s}a University,\\
 %60250 Tokat, Turkey}
%----------------------------------------------------------------------

\begin{abstract}
In this paper we propose  a new version of differential transform
method  (we shall call this method as $\alpha$-parameterized
differential transform method), which differs from the traditional
differential transform method in calculating coefficients of Taylor
polynomials. Numerical examples are presented to illustrate the
efficiency and reliability of own method.  The result reveal that
$\alpha$-Parameterized differential transform method is a simple and
effective numerical algorithm.
\end{abstract}

\begin{keyword}
%% keywords here, in the form: keyword \sep keyword
The differential transform method, boundary value problem,
approximation methods, eigenvalues.\vskip0.3cm\textbf{AMS subject
classifications} : 34B05, 74H10.

%% MSC codes here, in the form: \MSC code \sep code
%% or \MSC[2008] code \sep code (2000 is the default)

\end{keyword}

\end{frontmatter}

% \linenumbers
%% main text

%------------------------INTRODUCTION---------------------------
\section{Introduction}
%---------------------------------------------------------------
 Many problems
in mathematical physics, theoretical physics and chemical physics
are modelled by the so-called initial value and boundary value
problems in the second-order  ordinary differential equations. In
most cases, these problems may be too complicated to solve
analytically. Alternatively, the numerical methods can provide
approximate solutions rather than the analytic solutions of
problems. There are various approximation methods for solving a
system of differential equations, e.g. Adomian decomposition method
(ADM), Galerkin method, rationalized Haar functions method, homotopy
perturbation method (HPM), variational iteration method (VIM) and
the differential transform method (DTM).

The DTM is one of the numerical methods which enables to find
approximate solution in case of linear and non-linear systems of
differential equations. The main advantage of this method is that it
can be applied directly to nonlinear ODEs without requiring
linearization. The well known advantage of DTM is its simplicity and
accuracy in calculations and also wide range of applications.
Another important advantage is that this method is capable of
greatly reducing the size of computational work while still
accurately providing the series solution with fast convergence rate.
With this method, it is possible to obtain highly accurate results
or exact solutions for differential equations. The concept of the
differential transform method was first proposed by \cite{zh}, who
solved linear and nonlinear initial value problems in electric
circuit analysis. Afterwards, Chiou and Tzeng \cite{chi} applied the
Taylor transform to solve nonlinear vibration problems, Chen and Ho
\cite{ch} developed this method to various linear and nonlinear
problems such as two point boundary value problems and Ayaz
\cite{ay} applied it to the system of differential equations.
Abbasov et al. \cite{ab} used the method of differential transform
to obtain approximate solutions of the linear and non-linear
equations related to engineering problems and observed that the
numerical results are in good agreement with the analytical
solutions. In recent years  many authors has been used this method
for solving various types of equations. For example, this method has
been used for differential-algebraic equations \cite{co}, partial
differential equations \cite{ch, ja, ka, mo1}, fractional
differential equations \cite{mo} and difference equations \cite{ar}.
In \cite{so, so1, so2}, this method has been utilized for Telegraph,
Kuramoto-Sivashinsky and Kawahara equations. Shahmorad et al.
developed DTM to fractional-order integro-differential equations
with nonlocal boundary conditions \cite{na} and class of two
dimensional Volterra integral equations \cite{ta}. Abazari et. al.
applied this method for Schr\"{o}dinger equations \cite{bo}.
Different applications of DTM can be found in \cite{vd1,vd2}. Even
if the differential transform method (DTM) is an effective numerical
method for solving many initial value problem, there are also some
disadvantages, since this method  is designed for problems that have
analytic solutions (i.e. solutions that can be expanded in Taylor
series).

In this paper  we suggest a new version of DTM  which we shall
called  $\alpha$-parameterized differential transform method
($\alpha$-p DTM) to solve initial value and boundary value problems,
particularly eigenvalue problems. Note that, in the special cases
$\alpha=1 \ \textrm{and}  \ \alpha=0 $ of the $\alpha$-p DTM reduces
to the standard DTM.

%------------------------PRELIMINARIES---------------------------
\section{The classical DTM}
In this section, we  describe  the definition and some basic
properties of
 the classical differential transform method. An arbitrary analytic function  f(x) can
be expanded in Taylor series about a point $x=x_0$ as
\begin{eqnarray}\label{dtm1}
f(x)=\sum\limits_{k=0}^{\infty}\frac{(x-x_0)^{k}}{k!}\left[\frac{d^{k}f(x)}{dx^{k}}
\right]_{x=x_0}
\end{eqnarray}
The classical differential transformation of f(x) is defined as
\begin{eqnarray}\label{dtm2}
F(k)=\frac{1}{k!}\left[\frac{d^{k}f(x)}{dx^{k}} \right]_{x=x_0}
\end{eqnarray}
Then the inverse differential transform is
\begin{eqnarray}\label{dtm3}
f(x)=\sum_{k=0}^{\infty}(x-x_0)^{k}F(k).
\end{eqnarray}
The fundamental mathematical operations performed by differential
transform method are listed in following\\
i) If $f(x)=g(x)\pm h(x)$ then $F(k)=G(k)\pm H(k)$\\
ii) If $f(x)=\alpha g(x)$, $\alpha \in \mathbb{R}$, then
$F(k)=\alpha G(k)$\\
iii) If $f(x)=\frac{d^{m}g}{dx^{m}}$ then
$F(k)=(k+1)(k+2)...(k+m)G(k+m)$\\
iv) If $f(x)=x^{m}$ then $F(k)=\delta(k-m)= \left\{
\begin{array}{c}
1 \ \  for \ k=m  \\
0 \ \  for \ k\neq m
\end{array}
\right.$\\
v) If $f(x)=g(x) h(x)$ then $F(k)=\sum\limits_{m=0}^{k}H(m) G(k-m)$\\
\section{$\alpha$-Parameterized differential
transform method ($\alpha$-p DTM)} In this section we suggest a new
version of classical differential transform method
  by
following.

\label{dtm4} Let $I=[a,b]\subset \mathbb{R}$ be an arbitrary real
interval, $f:I\rightarrow\mathbb{R}$ is an infinitely differentiable
function (in real applications it is enough to required that $f(x)$
is sufficiently large order differentiable function) , $\alpha \in
[0,1]$ any real parameter and
 N any integer (large enough). Let us introduce the following
 notations
 \begin{eqnarray}\label{dtmm1}
D_{a}(f;k):=\frac{ f^{(k)}(a)}{k!}, \ \ \ D_{b}(f;k):= \frac{
f^{(k)}(b)}{k!}
\end{eqnarray}
\begin{eqnarray}\label{dtmm25}
D(f,\alpha;k):=\alpha D_{a}(f;k) +(1-\alpha) D_{b}(f;k), \ \ \ \
\alpha\in [0,1], \ \ k\in \mathbb{N}
\end{eqnarray}
\begin{dfn}\label{dfnn}
The sequence
 $$(D_{\alpha}(f)):=(D(f,\alpha;1), D(f,\alpha;2),...)$$
is called the $\alpha$-P transformation of the original function
f(x). The so-called ,,differential inverse" transform of
$D_{\alpha}(f)$ we define as
\begin{eqnarray}\label{dtmm6}
E_{\alpha}(D_{\alpha}(f)):=\sum_{k=0}^{\infty}D(f,\alpha;k)(x-x_{\alpha})^{k}
\end{eqnarray}
if the series is convergent, where $x_{\alpha}=\alpha
a+(1-\alpha)b$.
\end{dfn}
 The
function $\widetilde{f_{\alpha}}(x)$ defined by equality
$$\widetilde{f_{\alpha}}(x):=E_{\alpha}(D_{\alpha}(f))$$
we called the $\alpha$-parameterized approximation of the function
f(x).
\begin{rem}\label{rem1} In the cases of $\alpha=1 \  \textrm{and}  \ \alpha=0 $ the $\alpha$-p differential
transform (\ref{dtmm25}) reduces to the classical differential
transform (\ref{dtm2}) at the points  $x=a \  \textrm{and}  \ x=b $
respectively. Namely for $\alpha=0 \  \textrm{and}  \ \alpha=1 $ the
equality $\widetilde{f_{\alpha}}(x)=f(x)$ is hold.
\end{rem}
\begin{rem}\label{remn1}
For  practical application, instead of $\widetilde{f_{\alpha}}(x)$
it is convenient to introduced $N$-term $\alpha$-parameterized
approximation of the function $\widetilde{f_{\alpha}}(x)$ which we
shall define as
\begin{eqnarray}\label{dtmmn6}
\widetilde{f}_{\alpha,N}(x):=E_{\alpha,N}(D_{\alpha}(f)):=\sum_{k=0}^{N}D(f,\alpha;k)(x-x_{\alpha})^{k}
\end{eqnarray}
\end{rem}
\begin{thm}\label{rem11} If f(x) is constant function then
$\widetilde{f_{\alpha}}(x)=f(x)$ and
$\widetilde{f}_{\alpha,N}(x)=f(x)$ for each $N$.
\end{thm}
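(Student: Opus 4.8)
The plan is to unwind all the definitions and observe that for a constant function every derivative of order $\geq 1$ vanishes, so the $\alpha$-parameterized series collapses to a single surviving term. First I would write $f(x) \equiv c$ for some $c \in \mathbb{R}$. Then $f^{(0)}(x) = c$ while $f^{(k)}(x) = 0$ for all $k \geq 1$, and in particular $f^{(k)}(a) = f^{(k)}(b) = 0$ for $k \geq 1$. Plugging into the notations (\ref{dtmm1}) gives $D_a(f;0) = D_b(f;0) = c$ and $D_a(f;k) = D_b(f;k) = 0$ for $k \geq 1$. Substituting into (\ref{dtmm25}) then yields $D(f,\alpha;0) = \alpha c + (1-\alpha) c = c$ and $D(f,\alpha;k) = \alpha \cdot 0 + (1-\alpha)\cdot 0 = 0$ for every $k \geq 1$, regardless of the value of $\alpha \in [0,1]$.

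Next I would feed this into the differential inverse transform (\ref{dtmm6}): since only the $k=0$ term is nonzero, the series is trivially convergent and
$$\widetilde{f_{\alpha}}(x) = E_{\alpha}(D_{\alpha}(f)) = \sum_{k=0}^{\infty} D(f,\alpha;k)(x - x_{\alpha})^{k} = D(f,\alpha;0)(x-x_{\alpha})^{0} = c = f(x),$$
which holds for all $x$ (so in particular no convergence issue arises). The same computation applied to the $N$-term truncation (\ref{dtmmn6}) gives, for every integer $N \geq 0$,
$$\widetilde{f}_{\alpha,N}(x) = \sum_{k=0}^{N} D(f,\alpha;k)(x-x_{\alpha})^{k} = D(f,\alpha;0) = c = f(x),$$
since every term with $1 \leq k \leq N$ vanishes. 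This establishes both claimed equalities.

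There is essentially no obstacle here: the statement is a direct consequence of the definitions together with the elementary fact that the derivatives of a constant vanish. The only minor point worth making explicit is that the conclusion is genuinely independent of $\alpha$ — the convex combination in (\ref{dtmm25}) of two equal quantities returns that common value — which is why the result holds for the whole parameter range $\alpha \in [0,1]$ and not merely at the endpoints covered by Remark~\ref{rem1}. I would also remark that this is the natural sanity check that any reasonable transform method must pass, and it shows the $\alpha$-p DTM reproduces constants exactly at every truncation level $N$.
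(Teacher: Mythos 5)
Your proof is correct and is exactly the argument the paper intends: the paper simply declares the result ``immediate from Definition~\ref{dfnn} and Remark~\ref{remn1}'', and your computation (all $D(f,\alpha;k)$ vanish for $k\geq 1$ while $D(f,\alpha;0)=c$, so both the full series and every $N$-term truncation collapse to $c$) is the routine unwinding of those definitions. No gap and no genuinely different route — you have just written out the details the paper leaves implicit.
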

\begin{proof}The proof is immediate from Definition \ref{dfnn} and
Remark \ref{remn1}.
\end{proof}
\begin{thm}\label{dtm10}
 If $f(x)=c g(x)$, $c \in \mathbb{R}$, then
$D_{\alpha}(f)=c D_{\alpha}(g)  \ \textrm{and} \
\widetilde{f_{\alpha}}(x)=c \widetilde{g_{\alpha}}(x).$
\end{thm}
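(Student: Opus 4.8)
The plan is to unwind the definitions in two stages: first at the level of the transform sequences $D_\alpha(\cdot)$, then at the level of the inverse transform $E_\alpha(\cdot)$. The whole argument is essentially a bookkeeping exercise in linearity, so I do not anticipate any genuine obstacle; the only point requiring a word of care is the convergence of the series defining $\widetilde{f_\alpha}(x)$, which follows trivially from that of $\widetilde{g_\alpha}(x)$.

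First I would start from the observation that differentiation is linear, so from $f(x) = c\, g(x)$ we get $f^{(k)}(a) = c\, g^{(k)}(a)$ and $f^{(k)}(b) = c\, g^{(k)}(b)$ for every $k$. Dividing by $k!$ and using the notation (\ref{dtmm1}), this gives $D_a(f;k) = c\, D_a(g;k)$ and $D_b(f;k) = c\, D_b(g;k)$. Substituting into the defining combination (\ref{dtmm25}),
\begin{eqnarray*}
D(f,\alpha;k) &=& \alpha D_a(f;k) + (1-\alpha) D_b(f;k) \\
&=& \alpha c\, D_a(g;k) + (1-\alpha) c\, D_b(g;k) = c\,\big(\alpha D_a(g;k) + (1-\alpha) D_b(g;k)\big) = c\, D(g,\alpha;k)
\end{eqnarray*}
for all $k \in \mathbb{N}$. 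Reading this off componentwise in the sequence of Definition \ref{dfnn} yields $D_\alpha(f) = c\, D_\alpha(g)$, which is the first assertion.

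For the second assertion I would note that the anchor point $x_\alpha = \alpha a + (1-\alpha) b$ depends only on $\alpha$, $a$, $b$, and is therefore the same for $f$ and $g$. Then, using (\ref{dtmm6}) together with the identity just proved,
\begin{eqnarray*}
\widetilde{f_\alpha}(x) = E_\alpha(D_\alpha(f)) = \sum_{k=0}^{\infty} D(f,\alpha;k)(x-x_\alpha)^k = \sum_{k=0}^{\infty} c\, D(g,\alpha;k)(x-x_\alpha)^k = c \sum_{k=0}^{\infty} D(g,\alpha;k)(x-x_\alpha)^k = c\, \widetilde{g_\alpha}(x),
\end{eqnarray*}
where pulling the constant $c$ through the sum is legitimate precisely because the series for $\widetilde{g_\alpha}(x)$ converges, so the series for $\widetilde{f_\alpha}(x)$ converges to $c$ times its value. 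This completes the proof; if desired, the very same computation truncated at $k = N$ also shows $\widetilde{f}_{\alpha,N}(x) = c\, \widetilde{g}_{\alpha,N}(x)$ for the $N$-term approximation of Remark \ref{remn1}.
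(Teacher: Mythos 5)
Your proposal is correct and follows essentially the same route as the paper: establish $D(f,\alpha;k)=c\,D(g,\alpha;k)$ from the linearity of the defining combination (\ref{dtmm25}) and then pass to the series (\ref{dtmm6}). The only difference is cosmetic — the paper cites the known scaling property of the classical DTM where you derive $D_a(f;k)=c\,D_a(g;k)$ directly from linearity of differentiation, and you spell out the series step that the paper dismisses as immediate.
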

\begin{proof}\label{dtm11}
By applying the well-known properties of classical DTM we get
 \begin{eqnarray}\label{dtm12}
D(f,\alpha;k)&=&\alpha D_{a}(f;k) +(1-\alpha)
D_{b}(f;k)\nonumber\\&=&\alpha c  D_{a}(g;k)+(1-\alpha)c D_{b}(g;k)
\nonumber\\&=&c(\alpha   D_{a}(g;k)+(1-\alpha)
D_{b}(g;k))\nonumber\\&=&c D(g,\alpha;k)
\end{eqnarray}
Consequently $D_{\alpha}(f)=c D_{\alpha}(g)$, from which immediately
follows that $\widetilde{f_{\alpha}}(x)=c
\widetilde{g_{\alpha}}(x)$.
\end{proof}
\begin{thm}\label{dtm8}
 If $f(x)=g(x)\pm h(x)$ then $D_{\alpha}(f)=D_{\alpha}(g)\pm
D_{\alpha}(h) \ \textrm{and} \ \widetilde{f_{\alpha}}(x)=
\widetilde{g_{\alpha}}(x)\pm \widetilde{h}_{\alpha}(x).$
\end{thm}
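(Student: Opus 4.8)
The plan is to follow the pattern of the proof of Theorem \ref{dtm10}, with the scalar-factoring step replaced by additivity. First I would fix an arbitrary index $k\in\mathbb{N}$ and expand $D(f,\alpha;k)$ by its definition (\ref{dtmm25}). Since $f=g\pm h$, linearity of the $k$-th derivative gives $f^{(k)}(a)=g^{(k)}(a)\pm h^{(k)}(a)$ and $f^{(k)}(b)=g^{(k)}(b)\pm h^{(k)}(b)$; dividing by $k!$ (see (\ref{dtmm1})) this is precisely the classical additive rule $F(k)=G(k)\pm H(k)$, i.e. $D_{a}(f;k)=D_{a}(g;k)\pm D_{a}(h;k)$ and $D_{b}(f;k)=D_{b}(g;k)\pm D_{b}(h;k)$.

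Next I would substitute these two identities into (\ref{dtmm25}) and regroup the $\alpha$- and $(1-\alpha)$-weighted parts:
\begin{eqnarray*}
D(f,\alpha;k)&=&\alpha D_{a}(f;k)+(1-\alpha)D_{b}(f;k)\\
&=&\bigl(\alpha D_{a}(g;k)+(1-\alpha)D_{b}(g;k)\bigr)\pm\bigl(\alpha D_{a}(h;k)+(1-\alpha)D_{b}(h;k)\bigr)\\
&=&D(g,\alpha;k)\pm D(h,\alpha;k).
\end{eqnarray*}
As this holds for every $k$, the defining sequences agree term by term, which is exactly $D_{\alpha}(f)=D_{\alpha}(g)\pm D_{\alpha}(h)$.

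For the statement about the inverse transform I would apply $E_{\alpha}$ from Definition \ref{dfnn}. The point to note is that the centre $x_{\alpha}=\alpha a+(1-\alpha)b$ depends only on $\alpha$ and the interval, not on the function, so the three series $\widetilde{f_{\alpha}}$, $\widetilde{g_{\alpha}}$, $\widetilde{h}_{\alpha}$ are all expansions about the same point. Using the termwise identity just established together with the fact that the termwise sum (difference) of two convergent series converges to the sum (difference) of their limits, one gets $\widetilde{f_{\alpha}}(x)=\sum_{k\ge 0}D(f,\alpha;k)(x-x_{\alpha})^{k}=\sum_{k\ge 0}\bigl(D(g,\alpha;k)\pm D(h,\alpha;k)\bigr)(x-x_{\alpha})^{k}=\widetilde{g_{\alpha}}(x)\pm\widetilde{h}_{\alpha}(x)$, and the same computation applied to the finite sum (\ref{dtmmn6}) gives $\widetilde{f}_{\alpha,N}(x)=\widetilde{g}_{\alpha,N}(x)\pm\widetilde{h}_{\alpha,N}(x)$ for every $N$.

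There is essentially no hard step here; the only thing deserving a word of care — the nearest analogue of an obstacle — is the convergence bookkeeping in the last display: one should restrict to the intersection of the domains of convergence of the series for $g$ and $h$ (for the $N$-term version this caveat is unnecessary, since everything is a finite sum). Apart from that, the proof is a direct unwinding of the definitions and of property (i) of the classical DTM.
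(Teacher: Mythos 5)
Your proposal is correct and follows essentially the same route as the paper: expand $D(f,\alpha;k)$ via the definition (\ref{dtmm25}), use the classical additive rule $D_{a}(f;k)=D_{a}(g;k)\pm D_{a}(h;k)$, $D_{b}(f;k)=D_{b}(g;k)\pm D_{b}(h;k)$, regroup to get $D(f,\alpha;k)=D(g,\alpha;k)\pm D(h,\alpha;k)$, and then pass to the inverse transform termwise. Your version is in fact somewhat more careful than the paper's (which contains sign/typo slips such as writing $\pm$ between the $\alpha$ and $(1-\alpha)$ terms), and your remark about restricting to the common domain of convergence is a sensible, if minor, addition.
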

\begin{proof} By using the definition of transform (\ref{dtmm25})
 \begin{eqnarray}\label{dtm9}
D(f,\alpha;k)&=&\alpha D_{a}(f;k)\pm(1-\alpha)
D_{b}(f;k)\nonumber\\&=&\alpha D_{a}(g+h;k)\pm(1-\alpha)
D_{b}(g+h;k)\nonumber\\&=& D(g,\alpha;k)\pm D(h,\alpha;k)
\end{eqnarray}
Consequently $D_{\alpha}(f)=D_{\alpha}(g)\pm D_{\alpha}(h)$, from
which immediately follows that $ \widetilde{f_{\alpha}}(x)=
\widetilde{g}_{\alpha}(x)\pm \widetilde{h}_{\alpha}(x)$.
\end{proof}

\begin{thm}\label{dtm13}
Let  $f(x)=\frac{d^{m}g}{dx^{m}}$ and $m\in\mathbb{N}$. Then
$$D(f^{(m)},\alpha;k)=\frac{(k+m)!}{k!}D(f,\alpha;k+m)$$ and
$$\widetilde{f}_{\alpha}^{(m)}(x)=\sum\limits_{k=0}^{\infty}\frac{(k+m)!}{k!}D(f,\alpha;k+m)(x-x_{\alpha})^{k}$$
where  $x_{\alpha}=\alpha a+(1-\alpha)b$.
\end{thm}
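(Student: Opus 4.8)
The plan is to reduce the statement to the classical derivative rule (item (iii) of Section 2) applied separately at the two endpoints $a$ and $b$, and then to take the convex combination that defines the $\alpha$-P transform in (\ref{dtmm25}). Since $\frac{(k+m)!}{k!}=(k+1)(k+2)\cdots(k+m)$ is exactly the factor appearing in that classical rule, the first identity should come out almost automatically; the second identity is then just termwise differentiation of the series in (\ref{dtmm6}).

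First I would write, straight from (\ref{dtmm1}), that $D_{a}(f^{(m)};k)=\frac{(f^{(m)})^{(k)}(a)}{k!}=\frac{f^{(k+m)}(a)}{k!}=\frac{(k+m)!}{k!}\cdot\frac{f^{(k+m)}(a)}{(k+m)!}=\frac{(k+m)!}{k!}D_{a}(f;k+m)$, which is precisely the classical rule (iii) evaluated at $x_{0}=a$. The verbatim same computation at $x_{0}=b$ gives $D_{b}(f^{(m)};k)=\frac{(k+m)!}{k!}D_{b}(f;k+m)$. Substituting both into the definition $D(f^{(m)},\alpha;k)=\alpha D_{a}(f^{(m)};k)+(1-\alpha)D_{b}(f^{(m)};k)$ and pulling out the common factor $\frac{(k+m)!}{k!}$ (which does not depend on whether we sit at $a$ or at $b$), the bracket that remains is exactly $\alpha D_{a}(f;k+m)+(1-\alpha)D_{b}(f;k+m)=D(f,\alpha;k+m)$, and this establishes the first formula. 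This is the same ``evaluate the classical property at both endpoints, then convex-combine'' pattern already used in Theorems \ref{dtm10} and \ref{dtm8}.

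For the second formula I would start from $\widetilde{f_{\alpha}}(x)=\sum_{k=0}^{\infty}D(f,\alpha;k)(x-x_{\alpha})^{k}$ and differentiate $m$ times term by term. Differentiating $(x-x_{\alpha})^{k}$ exactly $m$ times annihilates the terms with $k<m$ and multiplies the remaining $k$-th term by $k(k-1)\cdots(k-m+1)$; after the index shift $k\mapsto k+m$ this factor becomes $(k+m)(k+m-1)\cdots(k+1)=\frac{(k+m)!}{k!}$ and the coefficient $D(f,\alpha;k)$ becomes $D(f,\alpha;k+m)$, which is exactly the asserted series for $\widetilde{f}_{\alpha}^{(m)}(x)$. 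The only step that is more than bookkeeping with factorials and an index shift is the legitimacy of differentiating the infinite series termwise; I expect this to be the one genuine point to address, and it is dispatched by the standard theorem that a power series may be differentiated term by term in the interior of its interval of convergence — which is the setting in which the series of Definition \ref{dfnn} is considered throughout the paper.
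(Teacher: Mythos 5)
Your proof is correct and, for the main identity, is exactly the paper's argument: apply the classical rule (iii) for the transform of an $m$-th derivative at each endpoint $a$ and $b$, pull out the common factor $(k+1)(k+2)\cdots(k+m)=\frac{(k+m)!}{k!}$, and recognize the remaining convex combination as $D(f,\alpha;k+m)$ (your version even fixes a small typo in the paper's intermediate line, where $D_a(f;k)$ appears in place of $D_a(f;k+m)$). The only divergence is in the second display: the paper obtains it in one line by substituting the first identity into the inverse transform $E_\alpha(D_\alpha(f^{(m)}))$, i.e.\ it reads $\widetilde{f}_\alpha^{(m)}$ as the $\alpha$-parameterized approximation of $f^{(m)}$, whereas you read it as the $m$-th derivative of $\widetilde{f}_\alpha$ and differentiate the series termwise with an index shift, justifying this by the standard theorem on differentiating power series inside the interval of convergence. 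Both readings yield the identical series, and your route has the mild virtue of making explicit the convergence point that the paper's one-liner sidesteps; no gap either way.
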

\begin{proof}\label{dtm14}We have from definition (\ref{dtmm25})
 \begin{eqnarray*}\label{dtm15}
D(f^{(m)},\alpha;k)&=&\alpha D_{a}(f^{(m)};k)+(1-\alpha)
D_{b}(f^{(m)};k)\nonumber\\&=&\alpha (k+1)(k+2)...(k+m)
D_{a}(f;k+m)\nonumber\\&+&(1-\alpha)(k+1)(k+2)...(k+m)
D_{b}(f;k+m)\nonumber\\&=&(k+1)(k+2)...(k+m)(\alpha
D_{a}(f;k)+(1-\alpha)D_{b}(f;k+m))\nonumber\\&=&\frac{(k+m)!}{k!}D(f,\alpha;k+m)
\end{eqnarray*}
Thus we get $D(f^{(m)},\alpha;k)=\frac{(k+m)!}{k!}D(f,\alpha;k+m)$.
Using this we find
$\widetilde{f}_{\alpha}^{(m)}(x)=\sum\limits_{k=0}^{\infty}\frac{(k+m)!}{k!}D(f,\alpha;k+m)(x-x_{\alpha})^{k}$.
\end{proof}
\begin{thm}\label{dtm16}
 Let $f(x)=x^{m}$, $m\in\mathbb{N}$. Then $$D(f,\alpha;k)=\left\{
\begin{array}{c}
\left(%
\begin{array}{c}
  m \\
  k \\
\end{array}%
\right)(\alpha a^{m-k}+(1-\alpha)b^{m-k}) \ \ for  \ k<m \\
 1 \ \ \ \ for \ \ \  \
k=m\\
0 \ \  \ \ for \  \ k> m
\end{array}
\right.$$
\end{thm}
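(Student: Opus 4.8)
The plan is to compute the $\alpha$-p transform of $f(x)=x^m$ directly from its definition \eqref{dtmm25}, namely $D(f,\alpha;k)=\alpha D_a(f;k)+(1-\alpha)D_b(f;k)$, by first evaluating the two classical pieces $D_a(x^m;k)$ and $D_b(x^m;k)$. Since $D_a(f;k)=f^{(k)}(a)/k!$ and, for $f(x)=x^m$, the $k$-th derivative is $f^{(k)}(x)=m(m-1)\cdots(m-k+1)x^{m-k}=\frac{m!}{(m-k)!}x^{m-k}$ whenever $k\le m$ and identically zero when $k>m$, I would substitute $x=a$ and $x=b$ and divide by $k!$. This yields $D_a(x^m;k)=\binom{m}{k}a^{m-k}$ and $D_b(x^m;k)=\binom{m}{k}b^{m-k}$ for $k<m$.

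Next I would split into the three cases of the claimed formula. For $k<m$: plug the two expressions above into \eqref{dtmm25} and factor out $\binom{m}{k}$ to obtain $D(f,\alpha;k)=\binom{m}{k}\bigl(\alpha a^{m-k}+(1-\alpha)b^{m-k}\bigr)$, which is exactly the stated value. For $k=m$: here $f^{(m)}(x)=m!$ is the constant $m!$, so $D_a(x^m;m)=D_b(x^m;m)=m!/m!=1$, and therefore $D(f,\alpha;m)=\alpha\cdot 1+(1-\alpha)\cdot 1=1$. For $k>m$: $f^{(k)}\equiv 0$, hence $D_a(x^m;k)=D_b(x^m;k)=0$ and $D(f,\alpha;k)=0$. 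Assembling the three cases gives precisely the piecewise formula in the statement.

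This argument is essentially bookkeeping, so there is no serious obstacle; the one place to be careful is the boundary case $k=m$, where the binomial-coefficient formula $\binom{m}{k}a^{m-k}$ would formally give $\binom{m}{m}a^{0}=1$ as well, so the ``$k=m$'' line is consistent with taking the $k<m$ formula's limit but is listed separately because the exponent $m-k$ is $0$ and no $\alpha$-dependence survives (indeed $\alpha\cdot 1+(1-\alpha)\cdot 1=1$ regardless of $\alpha$). One should also note that the result is consistent with property (iv) of the classical DTM recalled in Section~2: setting $\alpha=1$ (expansion at $a$) or $\alpha=0$ (expansion at $b$) and further specializing $a=0$ or $b=0$ recovers $D(f;k)=\delta(k-m)$, as it must by Remark~\ref{rem1}. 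I would close by remarking that, by the inverse-transform definition \eqref{dtmm6}, these coefficients reproduce $\widetilde{f_\alpha}(x)=\sum_{k=0}^{m}D(f,\alpha;k)(x-x_\alpha)^k$, a polynomial of degree $m$, consistent with $x^m$ being analytic everywhere.
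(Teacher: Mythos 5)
Your proposal is correct and follows essentially the same route as the paper: apply the definition $D(f,\alpha;k)=\alpha D_a(f;k)+(1-\alpha)D_b(f;k)$ with $D_a(x^m;k)=\binom{m}{k}a^{m-k}$, $D_b(x^m;k)=\binom{m}{k}b^{m-k}$ for $k<m$, and note that the cases $k=m$ and $k>m$ are immediate. The only difference is that you derive the classical coefficients from the explicit formula for $f^{(k)}(x)$ rather than citing them as known properties of the classical DTM, which is a harmless elaboration.
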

\begin{proof}\label{dtm17} Let $ k<m$. By using the definition of the transform
(\ref{dtmm25}) we have
 \begin{eqnarray}\label{dtm18}
D(f,\alpha;k)&=&\alpha D_{a}(f;k)+(1-\alpha)
D_{b}(f;k)\nonumber\\&=&\alpha D_{a}(x^{(m)};k)+(1-\alpha)
D_{b}(x^{(m)};k)\nonumber\\&=&\left(%
\begin{array}{c}
  m \\
  k \\
\end{array}%
\right)(\alpha a^{(m-k)}+(1-\alpha)b^{(m-k)})
\end{eqnarray}
 The equalities $\ D(x^{m},\alpha;m)=1 $ and $\
D(x^{m},\alpha;m+s)=0$ for $s\geq1$ is obvious.
\end{proof}
\begin{thm}\label{dtm16}
If $f(x)=g(x) h(x)$ then $D(f,\alpha;k)=\sum\limits_{m=0}^{k}[\alpha
 D_{a}(g;m)D_{a}(h;k-m)+(1-\alpha)D_{b}(g;m)D_{b}(h;k-m)]
$
\end{thm}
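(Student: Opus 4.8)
The plan is to reduce everything to the classical product rule (property v of Section 2) applied separately at the two endpoints $x=a$ and $x=b$. Recall that for an analytic product $f=gh$, the Leibniz formula gives $f^{(k)}(x_0)=\sum_{m=0}^{k}\binom{k}{m}g^{(m)}(x_0)h^{(k-m)}(x_0)$, which upon dividing by $k!$ is exactly the statement $D_{x_0}(f;k)=\sum_{m=0}^{k}D_{x_0}(g;m)\,D_{x_0}(h;k-m)$. Since this identity is valid at \emph{any} base point, I would invoke it once with $x_0=a$ and once with $x_0=b$.

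First I would write down the definition \eqref{dtmm25}, namely $D(f,\alpha;k)=\alpha D_{a}(f;k)+(1-\alpha)D_{b}(f;k)$. Then I would substitute the classical convolution expressions for $D_{a}(f;k)$ and $D_{b}(f;k)$ obtained above, yielding
\[
D(f,\alpha;k)=\alpha\sum_{m=0}^{k}D_{a}(g;m)D_{a}(h;k-m)+(1-\alpha)\sum_{m=0}^{k}D_{b}(g;m)D_{b}(h;k-m).
\]
Finally I would pull both coefficients $\alpha$ and $1-\alpha$ inside their respective finite sums and merge the two sums over the common index range $m=0,\dots,k$, arriving at
\[
D(f,\alpha;k)=\sum_{m=0}^{k}\bigl[\alpha\,D_{a}(g;m)D_{a}(h;k-m)+(1-\alpha)\,D_{b}(g;m)D_{b}(h;k-m)\bigr],
\]
which is the claimed formula.

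There is essentially no hard step here; the argument is a one-line substitution once the classical Leibniz/convolution rule is in hand. The only point worth flagging is a conceptual one rather than a technical obstacle: unlike properties i)--ii), the product rule does \emph{not} collapse to a clean ``$D_{\alpha}(f)=D_{\alpha}(g)\ast D_{\alpha}(h)$'' in terms of the $\alpha$-parameterized transforms alone, because the cross terms $D_{a}(g;m)D_{b}(h;k-m)$ never appear — the endpoint data stay ``uncoupled'' inside the convex combination. So I would state the result precisely in terms of the one-sided transforms $D_{a}$ and $D_{b}$ as above, and perhaps add a remark that one cannot rewrite the right-hand side purely through $D(g,\alpha;\cdot)$ and $D(h,\alpha;\cdot)$.
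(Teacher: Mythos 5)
Your proposal is correct and follows essentially the same route as the paper's proof: apply the classical Leibniz/convolution rule for the Taylor coefficients of a product separately at the endpoints $a$ and $b$, substitute into the definition $D(f,\alpha;k)=\alpha D_{a}(f;k)+(1-\alpha)D_{b}(f;k)$, and merge the two finite sums. Your added remark that the result does not reduce to a convolution of the $\alpha$-parameterized transforms themselves is a correct and worthwhile observation, but it is not needed for the proof.
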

\begin{proof}\label{dtm17}By using the definition of transform given in Eq.
(\ref{dtmm25}) we have
 \begin{eqnarray}\label{dtm18}
D(f,\alpha;k)&=&\alpha D_{a}(f;k)+m(1-\alpha)
D_{b}(f;k)\nonumber\\&=&\alpha D_{a}(gh;k)+(1-\alpha)
D_{b}(gh;k)\nonumber\\&=&\frac{\alpha}{k!}\sum\limits_{m=0}^{k}
 \left(%
\begin{array}{c}
  k \\
  m \\
\end{array}%
\right)g^{(m)}(a)h^{(k-m)}(a)+\frac{(1-\alpha)}{k!}\sum\limits_{m=0}^{k}
 \left(%
\begin{array}{c}
  k \\
  m \\
\end{array}%
\right)g^{(m)}(b)h^{(k-m)}(b)\nonumber\\&=&\sum\limits_{m=0}^{k}[\alpha
 D_{a}(g;m)D_{a}(h;k-m)+(1-\alpha)D_{b}(g;m)D_{b}(h;k-m)].
\end{eqnarray}
\end{proof}

\section{Justification  of the  $\alpha$-p DTM  }In order to show the effectiveness of $\alpha$-p DTM for solving  boundary value problems
, examples is demonstrated.
\begin{ex}\label{dtmm16}(Application to boundary-value problem) Let us consider the differential equation
\begin{eqnarray}\label{dtmm17}
\ell y:= y''(x)+\mu^{2}y(x)=0, \ \ \ \ x \in [0,1],  \ \ \mu
 \in \mathbb{R}.
\end{eqnarray}
with the boundary conditions
 \begin{eqnarray}\label{dtmnm18}
 y(0)=0, \ \ \  y(1)=1.
\end{eqnarray}
Exact solution for this problem is
\begin{eqnarray}
y(x) =\frac{\sin\mu x}{\sin\mu}.
\end{eqnarray}
\end{ex}
Applying the $N$-term $\alpha$-p differential transform  of both
sides (\ref{dtmm17}) and (\ref{dtmnm18}) we obtain the following
$\alpha$-parameterized boundary value problem as
\begin{eqnarray}\label{dý17}
(\widetilde{\ell y})_{\alpha,N}=\widetilde{0}_{\alpha,N}, \ \
\widetilde{y}_{\alpha,N}(0)=\widetilde{0}_{\alpha,N}, \ \ \
\widetilde{y}_{\alpha;N}(1)=\widetilde{1}_{\alpha,N}.
\end{eqnarray}
By using the fundamental operations of $\alpha$-p DTM we have
\begin{eqnarray}\label{or1}
D(y,\alpha;k+2)=-\frac{\mu^{2} D(y,\alpha;k)}{ (k+1)(k+2)}
\end{eqnarray}
The boundary conditions given in (\ref{dtmnm18}) can be transformed
as follows
\begin{eqnarray}\label{or2}
\widetilde{y}_{\alpha,N}(0)=\sum_{k=0}^{N}D(y,\alpha;k)(\alpha-1)^{k}=0
\ \ \ \textrm{and} \ \ \
\widetilde{y}_{\alpha,N}(1)=\sum_{k=0}^{N}D(y,\alpha;k)\alpha^{k}=1
\end{eqnarray}
Using (\ref{or1}) and (\ref{or2}) and by taking $N=5$, the following
$\alpha$-p approximate solution is obtained
\begin{eqnarray}\label{or3}
\widetilde{y}_{\alpha}(x)&=&A+(x-x_\alpha)B-\frac{\mu^{2}(x-x_\alpha)^{2}A}{2}-\frac{\mu^{2}(x-x_\alpha)^{3}B}{6}+\frac{\mu^{4}(x-x_\alpha)^{4}A}{24}
\nonumber\\&+&\frac{\mu^{4}(x-x_\alpha)^{5}B}{120}+O(x^6)
\end{eqnarray}
where $x_\alpha=(1-\alpha)$, according to (\ref{dtmm6}),
$D(y,\alpha;0)=A$ and $D(y,\alpha;1)=B.$ The constants $A$ and $B$
evaluated from equations in (\ref{or1}) as follow
\begin{eqnarray}\label{eq}
A&=&(2880(x_{0}^{4}\mu^{8}-4x_{0}^{5}\mu^{8}+6x_{0}
^{6}\mu^{8}+24\mu^{4}-4x_{0} ^{7}\mu^{8}-480\mu^{2}+x_{0} ^{8}\mu^{8}+2880 \nonumber\\
&-& 12x_{0} ^{2}\mu^{6}+40x_{0} ^{3}\mu^{6} -60x_{0} ^{4}\mu^{6}
+48x_{0} ^{5}\mu^{6} -16x_{0} ^{6}\mu^{6})^{-1} )\nonumber\\
&\times&(x_{0} -\frac{%
\mu^{2} x_{0} ^{3}}{6}+\frac{\mu^{4}x_{0} ^{5}}{120})
\end{eqnarray}
and
\begin{eqnarray}\label{eq1}
 B&=&(2880(x_{0}^{4}\mu^{8}-4x_{0}^{5}\mu^{8}+6x_{0}
^{6}\mu^{8}+24\mu^{4}-4x_{0} ^{7}\mu^{8}-480\mu^{2}+x_{0} ^{8}\mu^{8}+2880 \nonumber\\
&-& 12x_{0} ^{2}\mu^{6}+40x_{0} ^{3}\mu^{6} -60x_{0} ^{4}\mu^{6}
+48x_{0} ^{5}\mu^{6} -16x_{0} ^{6}\mu^{6})^{-1} )\nonumber\\
&\times&(1-\frac{\mu^{2} x_{0} ^{2}}{%
2 }+\frac{\mu ^{4}x_{0}^{4}}{24})
\end{eqnarray}
\begin{ex}\label{dtmmm16}(Application to eigenvalue problems)We consider the following eigenvalue problem
\begin{eqnarray}\label{dtmmm17}
 y''+\lambda y=0, \ \ \ \ x \in [0,1]
\end{eqnarray}
 \begin{eqnarray}\label{dtmmb18}
A_{11} y(0)+A_{12} y'(0)=0 \\ \label{dtmmmb18}
 A_{21} y(1)+A_{22} y'(1)=0
\end{eqnarray}
Taking the $\alpha$-p differential transform  of both sides
(\ref{dtmmm17}) we find
\begin{eqnarray}\label{dýn17}
D(y''+\lambda y,\alpha;k)=(k+1)(k+2)D(y,\alpha;k+2)+\lambda
D(y,\alpha;k)=0.
\end{eqnarray}
Thus the following recurrence relation is obtained
\begin{eqnarray}\label{onr1}
D(y,\alpha;k+2)=-\frac{\lambda D(y,\alpha;k)}{ (k+1)(k+2)}.
\end{eqnarray}
Using definition of  $\alpha$-p differential transform we get
\begin{eqnarray}\label{oyrn2}
\widetilde{y}_{\alpha}(x)=\sum_{k=0}^{\infty}D(y,\alpha;k)(x-x_{\alpha})^{k}
\\ \label{ogrn2} \widetilde{y'}_{\alpha}(x)=\sum_{k=0}^{\infty}k
D(y,\alpha;k)(x-x_{\alpha})^{k-1}
\end{eqnarray}
Consuquently
\begin{eqnarray}\label{orrn2}
\widetilde{y}_{\alpha}(0)=\sum_{k=0}^{\infty}D(y,\alpha;k)(\alpha-1)^{k}=\sum_{k=0}^{\infty}(-1)^{k}D(y,\alpha;k)(1-\alpha)^{k}
\\ \label{orrsn2} \widetilde{y'}_{\alpha}(0)=\sum_{k=0}^{\infty}k
D(y,\alpha;k)(\alpha-1)^{k-1}=\sum_{k=0}^{\infty}(-1)^{k}k
D(y,\alpha;k)(1-\alpha)^{k-1}
\end{eqnarray}
Thus the boundary condition given in (\ref{dtmmb18}) can be
transformed as follows
\begin{eqnarray}\label{dtsmmmb18}
 A_{11} \widetilde{y}_{\alpha}(0)+A_{12} \widetilde{y'}_{\alpha}(0)
=\sum_{k=0}^{\infty}( A_{11}(\alpha-1)^{k}+k
A_{12}(\alpha-1)^{k-1})D(y,\alpha;k)=0
\end{eqnarray}
Similarly we have
\begin{eqnarray}\label{ors2}
\widetilde{y}_{\alpha}(1)=\sum_{k=0}^{\infty}D(y,\alpha;k)\alpha^{k}
\end{eqnarray}
and
\begin{eqnarray}\label{on2}
\widetilde{y'}_{\alpha}(1)=\sum_{k=0}^{\infty}k
D(y,\alpha;k)\alpha^{k-1}
\end{eqnarray}
In this case the boundary condition given in (\ref{dtmmmb18}) can be
written as follows
\begin{eqnarray}\label{mnk}
 A_{21} \widetilde{y}_{\alpha}(1)+A_{22} \widetilde{y'}_{\alpha}(1)
=\sum_{k=0}^{\infty}( A_{21}\alpha^{k}+k
A_{22}\alpha^{k-1})D(y,\alpha;k)=0
\end{eqnarray}
Let $D(y,\alpha;0)=A \ \ \ \textrm{and} \ \ \ D(y,\alpha;1)=B.$
Substituting these values in (\ref{onr1}) we have the following
recursive procedure
\begin{eqnarray}\label{mnnk}
D(y,\alpha;k)=\left\{
                \begin{array}{ll}
                  \frac{A(-\lambda)^{\ell}}{(2\ell!)},   \ \textrm{ for} \  k=2\ell \\
                  \frac{B(-\lambda)^{\ell}}{(2\ell+1)!},   \ \textrm{for} \    k=2\ell+1
                \end{array}
              \right.
\end{eqnarray}Substituting (\ref{mnnk}) in (\ref{dtsmmmb18})and (\ref{mnk}) we find
\begin{eqnarray}\label{dn8}
&&A\{\sum_{\ell=0}^{\infty}( A_{11}(\alpha-1)^{2\ell}+2\ell
A_{12}(\alpha-1)^{2\ell-1})\frac{(-\lambda)^{\ell}}{(2\ell)!}\}\nonumber\\&+&B\{\sum_{\ell=0}^{\infty}(
A_{11}(\alpha-1)^{2\ell+1}+(2\ell+1)
A_{12}(\alpha-1)^{2\ell})\frac{(-\lambda)^{\ell}}{(2\ell+1)!}\}=0
\end{eqnarray}and
\begin{eqnarray}\label{dmn8}
&& A\{\sum_{\ell=0}^{\infty}( A_{21}\alpha^{2\ell}+2\ell
A_{22}\alpha^{2\ell-1})\frac{(-\lambda)^{\ell}}{(2\ell)!}\}\nonumber\\&+&B\{\sum_{\ell=0}^{\infty}(
A_{21}\alpha^{2\ell+1}+(2\ell+1)
A_{22}\alpha^{2\ell})\frac{(-\lambda)^{\ell}}{(2\ell+1)!}\}=0.
\end{eqnarray}
respectively. In this case we have a linear system  of the equations
with respect to the variables $A$ and $B$ as
\begin{eqnarray}\label{dnnn8}
AP_{11}(\lambda)+B P_{12}(\lambda)=0 \\ \label{dmmnn8} A
P_{21}(\lambda)+B P_{22}(\lambda)=0
\end{eqnarray}
where $P_{11}(\lambda):=\sum_{\ell=0}^{\infty}(
A_{11}(\alpha-1)^{2\ell}+2\ell
A_{12}(\alpha-1)^{2\ell-1})\frac{(-\lambda)^{\ell}}{(2\ell)!},$
$P_{12}(\lambda):=\sum_{\ell=0}^{\infty}(
A_{11}(\alpha-1)^{2\ell+1}+(2\ell+1)
A_{12}(\alpha-1)^{2\ell})\frac{(-\lambda)^{\ell}}{(2\ell+1)!}$,
$P_{21}(\lambda):=\sum_{\ell=0}^{\infty}( A_{21}\alpha^{2\ell}+2\ell
A_{22}\alpha^{2\ell-1})\frac{(-\lambda)^{\ell}}{(2\ell)!}$ and
$P_{22}(\lambda):=\sum_{\ell=0}^{\infty}(
A_{21}\alpha^{2\ell+1}+(2\ell+1)
A_{22}\alpha^{2\ell})\frac{(-\lambda)^{\ell}}{(2\ell+1)!}$. Since
the system (\ref{dnnn8})-(\ref{dmmnn8}) has a nontrivial solution
for $A$ and $B$ the characteristic determinant is zero i.e.
$$P(\lambda)=\left|
    \begin{array}{cc}
      P_{11}(\lambda) & P_{12}(\lambda) \\
      P_{21}(\lambda) & P_{22}(\lambda) \\
    \end{array}
  \right|=0.
$$
Thus we have characteristic equation for eigenvalues by $\alpha$-p
DTM Now, let us find the exact eigenvalues and eigenfunctions of the
Sturm-Liouville problem (\ref{dtmmm17})-(\ref{dtmmmb18}). The
general solution of equation (\ref{dtmmmb18}) have the form
\begin{eqnarray}\label{gen}
y(x)=C\cos \mu x+ D\sin \mu x
\end{eqnarray}
where $\lambda=\mu^{2}$ and $C, D$ are arbitrary constants. Applying
the boundary conditions (\ref{dtmmm17}), (\ref{dtmmb18})  we get
 \begin{eqnarray}\label{dff}
&&A_{11} C+\mu A_{12} D=0 \\ \label{dfff} && (A_{21}\cos -\mu
A_{22}\sin \mu)C+(A_{21}\cos -\mu A_{22}\sin \mu)D=0
\end{eqnarray}
Because we cannot have $C=D=0$, this implies
 \begin{eqnarray}\label{dfm}
(A_{11}A_{21}+\mu^{2} A_{12}A_{22})\sin \mu-\mu(A_{12}A_{21}-
A_{11}A_{22})\cos \mu=0.
\end{eqnarray}
This is transcendental equation which is solved graphically. Let
$\mu=\mu_n, n\in \mathbb{N}$ are points of intersection of the
graphs of the functions
 \begin{eqnarray}\label{kl}
&&y=(A_{11}A_{21}+\mu^{2} A_{12}A_{22})\sin \mu \ \ \  \textrm{and} \\
\label{kll} &&y=\mu(A_{12}A_{21}- A_{11}A_{22})\cos \mu
\end{eqnarray}
The eigenvalues and corresponding eigenfunctions are therefore given
by
\begin{eqnarray}\label{kmm}
\lambda_n=\mu_{n}^{2} \ \   \textrm{and} \ \  y_n(x)=C_n\cos \mu_n
x+ D_n\sin \mu_n x, \ n\in \mathbb{N}.
\end{eqnarray}
Now we consider special case of the Sturm-Liouville problem
(\ref{dtmmmb18})-(\ref{dtmmm17}),
\begin{eqnarray}\label{sp}
&&y''+\lambda y=0 \\
  \label{spl}&&y(0)+y'(0)=0 \\ \label{spll}&&y(1)-y'(1)=0.
\end{eqnarray}
The eigenvalues of this problem are determined by the equation
\begin{eqnarray}\label{spp}
\tan \mu=\frac{2\mu}{1-\mu^{2}}.
\end{eqnarray}
This equation can be solved graphically by the points of
intersections of the graphs of functions
\begin{eqnarray}\label{stpp}
y=\tan \mu \ \ \textrm{and} \ \ y=\frac{2\mu}{1-\mu^{2}}
\end{eqnarray}
as shown by the sequence $(\mu_n)$ in Figure 1.

\begin{figure}
  % Requires \usepackage{graphicx}
  \includegraphics[height=6cm,width=12cm]{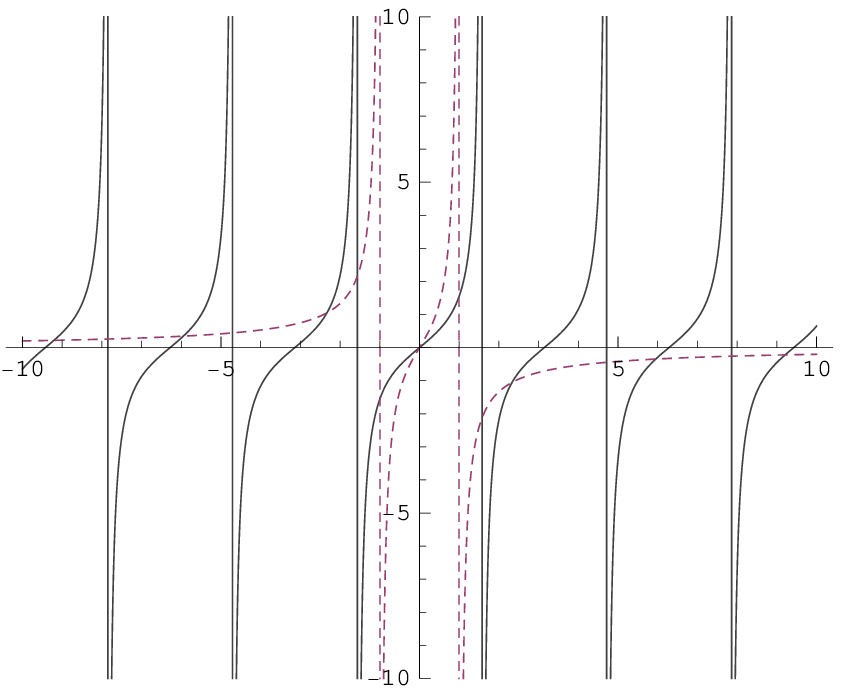}
  \caption{ }\label{f1}
\end{figure}

The eigenvalues of the considered problem are given by
$\lambda_n=\mu_{n}^{2}$ and corresponding eigenfunctions are given
by
\begin{eqnarray}\label{sstpp}
y_n(x)=C_n\cos \mu_n x+ D_n\sin \mu_n x, \ n\in \mathbb{N}.
\end{eqnarray}
Taking the $\alpha$-p differential transform  of both sides the
equation (\ref{sp}) the following recurrence relation is obtained
\begin{eqnarray}\label{oddh}
D(y,\alpha;k+2)=-\frac{\lambda D(y,\alpha;k)}{ (k+1)(k+2)}.
\end{eqnarray}

Applying N-term $\alpha$-p differential transform  the boundary
conditions  (\ref{spl})-(\ref{spll}) are transformed as follows:
\begin{eqnarray}\label{kml}
 && \widetilde{y}_{\alpha}(0)+ \widetilde{y'}_{\alpha}(0)
=\sum_{k=0}^{N}((\alpha-1)^{k}+k (\alpha-1)^{k-1})D(y,\alpha;k)=0
\\ \label{khg}
 &&\widetilde{y}_{\alpha}(1)-\widetilde{y'}_{\alpha}(1)
=\sum_{k=0}^{N}( \alpha^{k}-k\alpha^{k-1})D(y,\alpha;k)=0
\end{eqnarray}
By using (\ref{oddh}), (\ref{kml}) and (\ref{khg}) we obtain the
following equalities (for N=6)
\begin{eqnarray}\label{dnny8}
&&
A\bigg[1+((\alpha-1)^{2}+2(\alpha-1))\frac{(-\lambda)}{2!}+((\alpha-1)^{4}+4(\alpha-1)^{3})\frac{\lambda^{2}}{4!}\nonumber\\&+&
((\alpha-1)^{6}+6(\alpha-1)^{5})\frac{(-\lambda^{3})}{6!}\bigg]+B\bigg[\alpha+((\alpha-1)^{3}+3(\alpha-1)^{2})\frac{(-\lambda)}{3!}\nonumber\\
&+&((\alpha-1)^{5}+5(\alpha-1)^{4})\frac{\lambda^{2}}{5!}\bigg]=0
\end{eqnarray}and
\begin{eqnarray}\label{dnyy8}
&&
A\bigg[1+(\alpha^{2}-2\alpha)\frac{(-\lambda)}{2!}+(\alpha^{4}-4\alpha^{3})\frac{\lambda^{2}}{4!}+
(\alpha^{6}-6\alpha^{5})\frac{(-\lambda^{3})}{6!}\bigg]\nonumber\\&+&B\bigg[(\alpha-1)+(\alpha^{3}-3\alpha^{2})\frac{(-\lambda)}{3!}+(\alpha^{5}-5\alpha^{4})\frac{\lambda^{2}}{5!}\bigg]=0
\end{eqnarray}
\end{ex}
Since the system (\ref{dnny8})-(\ref{dnyy8}) has a nontrivial
solution for $A$ and $B$ the characteristic determinant is zero i.e.
$$a(\lambda)=\left|
    \begin{array}{cc}
      a_{11}(\lambda) & a_{12}(\lambda) \\
      a_{21}(\lambda) & a_{22}(\lambda) \\
    \end{array}
  \right|=0
$$
where
$a_{11}=1+((\alpha-1)^{2}+2(\alpha-1))\frac{(-\lambda)}{2!}+((\alpha-1)^{4}+4(\alpha-1)^{3})\frac{\lambda^{2}}{4!}+
((\alpha-1)^{6}+6(\alpha-1)^{5})\frac{(-\lambda^{3})}{6!}$,
$a_{12}=\alpha+((\alpha-1)^{3}+3(\alpha-1)^{2})\frac{(-\lambda)}{3!}\nonumber\\
+((\alpha-1)^{5}+5(\alpha-1)^{4})\frac{\lambda^{2}}{5!}$,
$a_{21}=1+(\alpha^{2}-2\alpha)\frac{(-\lambda)}{2!}+(\alpha^{4}-4\alpha^{3})\frac{\lambda^{2}}{4!}+
(\alpha^{6}-6\alpha^{5})\frac{(-\lambda^{3})}{6!}$,
$a_{22}=(\alpha-1)+(\alpha^{3}-3\alpha^{2})\frac{(-\lambda)}{3!}+(\alpha^{5}-5\alpha^{4})\frac{\lambda^{2}}{5!}$.\\
Taking $\alpha=\frac{1}{2}$ we have the following algebraic equation
for approximate  eigenvalues:
\begin{eqnarray}\label{dnmk}
-1-\frac{\lambda}{6}+\frac{11\lambda^{2}}{120}-\frac{89\lambda^{3}}{15360}+\frac{299\lambda^{4}}{2211840}-\frac{11\lambda^{5}}{9830400}=0
\end{eqnarray}
This equation can be solved by various numerical methods.

\section{Analysis of the method}
In this study we introduce a new version of classical DTM that will
extend the application of the method to spectral analysis of
boundary-value problems involving eigenvalue parameter, which  arise
from problems of mathematical physics. Numerical results reveal that
the $\alpha$-p DTM is a powerful tool for solving many initial value
and boundary value problems. It is concluded that comparing with the
standard DTM, the $\alpha$-p DTM reduces computational cost in
obtaining approximated solutions. This method unlike most numerical
techniques provides a closed-form solution. It may be concluded that
$\alpha$-p DTM is very powerful and efficient in finding approximate
for wide classes of  boundary value problems.  The main advantage of
the method is the fact that it provides its user with an analytical
approximation, in many cases an exact solution, in a rapidly
convergent sequence with elegantly computed terms.

%-----------------------------------------------------------------
\end{document}